
\documentclass[15pt]{amsart}
\usepackage{amsmath}
\usepackage{amssymb}
\usepackage[all]{xy}
\usepackage{graphicx}
\usepackage{mathrsfs}

\usepackage{graphicx}
\usepackage[all]{xy}
\SelectTips{cm}{}

\usepackage[dvips]{epsfig}

\usepackage{pinlabel}

\setcounter{tocdepth}{1}

\numberwithin{equation}{section}

\usepackage{url}
\usepackage{tikz}
\usetikzlibrary{positioning}
\usetikzlibrary{calc}

\usepackage[a4paper, total={6.5in,9in}]{geometry}

\usetikzlibrary{decorations.pathreplacing}

\usepackage[latin1]{inputenc}
\usepackage{xspace,amssymb,amsfonts,euscript}
\usepackage{amsthm,amsmath}
\usepackage{euscript}
\input xy \xyoption {all}

\usepackage{tikz}
\usepackage{wrapfig}

\RequirePackage{color}
\definecolor{myred}{rgb}{0.75,0,0}
\definecolor{mygreen}{rgb}{0,0.5,0}
\definecolor{myblue}{rgb}{0,0,0.65}

\RequirePackage{ifpdf}
\ifpdf
  \IfFileExists{pdfsync.sty}{\RequirePackage{pdfsync}}{}
  \RequirePackage[pdftex,
   colorlinks = true,
   urlcolor = myblue, % \href{...}{...} external (URL)
   citecolor = mygreen, % \cite{...}
   linkcolor = myred, % \ref{...} and \pageref{...}
   pagebackref,
%   pdfpagemode=None,
   bookmarksopen=true]{hyperref}
\else
  \RequirePackage[hypertex]{hyperref}
\fi

\RequirePackage{ae, aecompl, aeguill} % to have pretty pdf

% gothic, blackboard

    \def\QM{{\mathbb{Q}}}

    \def\ZM{{\mathbb{Z}}}
% bold, mathcal

\def\HC{{\mathbf H}}    \def\HC{{\mathcal{H}}}

    \def\MC{{\mathcal{M}}}

% EuScript

% greek

\def\b{\beta}

\newcommand{\nc}{\newcommand} \newcommand{\renc}{\renewcommand}

\newcommand{\rdots}{\mathinner{ \mkern1mu\raise1pt\hbox{.}
    \mkern2mu\raise4pt\hbox{.}
    \mkern2mu\raise7pt\vbox{\kern7pt\hbox{.}}\mkern1mu}}

\def\un{\underline}

\def\p{{}^p}

\def\to{\rightarrow}

\def\longto{\longrightarrow}

\def\onto{\twoheadrightarrow}
\nc{\triright}{\stackrel{[1]}{\to}}
\nc{\longtriright}{\stackrel{[1]}{\longto}}

\nc{\Hb}{H^\bullet}

\nc{\Br}{\mathcal{\color{red}b}}
\nc{\HotRR}{{}_R\mathcal{K}_R}
\nc{\HotR}{\mathcal{K}_R}
\nc{\excise}[1]{}
\nc{\defect}{\text{df}}
\nc{\h}[1]{\underline{H}_{#1}}

\nc{\Ga}{\mathbb{G}_a} % additive group
\nc{\Gm}{\mathbb{G}_m} % multiplicative group

\nc{\Perv}{{\mathbf{P}}}

%\nc{\A}{\mathbb{A}} % affine space 
\nc{\IH}{{\mathrm{IH}}}
%\nc{\sH}{\mathcal{H}} % sheaf cohomology

\nc{\ic}{\mathbf{IC}}

\nc{\gl}{{\mathfrak{gl}}}
\renc{\sl}{{\mathfrak{sl}}}
\renc{\sp}{{\mathfrak{sp}}}

\renc{\Im}{\textrm{Im}}

\nc{\HCM}{H^{BM}}

 % Forgetful functor
 \DeclareMathOperator{\Hom}{Hom}
\DeclareMathOperator{\gHom}{\Hom^\bullet}
 \DeclareMathOperator{\ch}{ch}

\DeclareMathOperator{\End}{End} %\DeclareMathOperator{\BS}{BS}

\DeclareMathOperator{\id}{id}

\newtheorem{thm}{Theorem}[section]
\newtheorem{lem}[thm]{Lemma}

\theoremstyle{definition}

\theoremstyle{remark}

\newcommand{\into}{\hookrightarrow}

\def\iff{\Leftrightarrow}

\nc{\simto}{\stackrel{\sim}{\to}}

\nc{\SD}{\mathcal{H}_{\mathrm{BS}}}

\title{A non-perverse  Soergel bimodule in type $A$}

 \author{Nicolas Libedinsky and Geordie Williamson}
% \email{geordie@mpim-bonn.mpg.de}
% \address{Max-Planck-Institut f\"ur Mathematik, Vivatsgasse 7, 53111,
%   Bonn, Germany.}

\begin{document}

%\tableofcontents

\begin{abstract}
A basic question concerning indecomposable Soergel bimodules is to understand their
endomorphism rings. In characteristic zero all
degree-zero endomorphisms are isomorphisms (a fact proved
by Elias and the second author) which implies the Kazhdan-Lusztig
conjectures. More recently, many examples in
positive characteristic have been discovered with larger degree zero
endomorphisms. These give counter-examples to expected bounds
in Lusztig's conjecture. Here we prove the existence of
indecomposable Soergel bimodules in type $A$ having non-zero
endomorphisms of negative degree. This gives the existence of a
non-perverse parity sheaf in type \emph{A}.
\end{abstract}

\maketitle

\section{Introduction}

Kazhdan-Lusztig polynomials play a central role in highest weight
representation theory. It is gradually becoming clear that in
modular (i.e. characteristic $p$) representation theory a similarly
central  role is played by $p$-Kazhdan-Lusztig polynomials
\cite{RW,JW,WExplosion,WTakagi,AMRW2}. Just as
Kazhdan-Lusztig polynomials describe the stalks of intersection
cohomology complexes on flag varieties, $p$-Kazhdan-Lusztig
polynomials describe the stalks of parity sheaves with coefficients in
a field of charateristic $p$ \cite[Part 3]{RW}.

Kazhdan-Lusztig polynomials are characterised by a self-duality
condition and a degree bound, which mimics the defining properties of
the intersection cohomology sheaf. Currently there exists no
similar combinatorial characterisation of $p$-Kazhdan-Lusztig polynomials,
however they always satisfy the self-duality condition. An important
problem concerning $p$-Kazhdan-Lusztig polynomials is whether the
degree bound for $p$-Kazhdan-Luszig polynomials can ``fail by more than
one''.  In the language of parity sheaves, this
translates into the question as to whether an indecomposable
parity sheaf is necessarily perverse. In the language of Soergel bimodules, it
translates into the question as to whether an indecomposable Soergel
bimodule can have non-zero (and necessarily nilpotent) endomorphisms
of negative degree. In this paper an indecomposable Soergel bimodule
posessing such an endomorphism is called \emph{non-perverse}.

Since the beginning of the theory of parity sheaves, it was known that
parity sheaves need not be perverse on nilpotent cones and on the affine flag
variety (see \cite[\S 4.3]{JMW2} and
\cite[Lemma 3.7]{JMW3}). In 2009, the second author found an
example of a parity 
sheaf on a finite flag variety of type $C_3$ in characteristic $2$ which is not perverse
\cite[\S 5.4]{JW}. Moreover, a recent conjecture of Lusztig and the
second author implies that parity sheaves can be arbitrarily far
from being perverse on the affine flag manifold of $\textrm{SL}_3$ \cite{LW}.
However, in \cite{JMW3,MR} it is proved that
parity sheaves on the affine Grassmannian \emph{are} perverse as long as $p$
is a good prime. (Recall that a prime $p$ is good for a
fixed root system if it does not divide any coefficient of the highest
root when expressed in the simple roots.)
Extenstive calculations on finite flag
varieties have suggested that parity sheaves are perhaps 
perverse in good characteristic. Recently, Achar and Riche proved
that this would have nice consequences (existence of ``Koszul like
gradings'') on modular category $\mathcal{O}$ \cite{ARIII}.

In this note we prove the existence of a parity sheaf in characteristic
$p = 2$\footnote{Note that $p = 2$ is good for $\textrm{GL}_n$.} on
the flag variety $\textrm{GL}_{15}/B$ which is not
perverse. In other words, non-perverse Soergel
bimodule for $S_{15}$ exist. 
Our construction is a variation of the
method of \cite{WExplosion}.
% We are publishing this construction because we
% found the method interesting and the result surprising.
We 
expected to be able to produce many examples in this way (and thus
obtain results similar to \cite{WExplosion} for non-perverse Soergel
bimodules), however extensive computer calculations only produced a
few more examples, all in characteristic $2$. % We hope that making our
% method available might lead to further progress and understanding of
% this important problem.

Another interesting consequence of our construction is that it gives a
Schubert variety for the general linear group with no semi-small
(generalised) Bott-Samelson resolution. More generally, the Schubert
 variety in question admits no semi-small even (in the sense of \cite[\S 2.4]{JMW2})
resolution. Probably the Schubert
variety admits no semi-small resolution at all.

\emph{Acknowledgements:} The second author presented these results at
AIM, Palo Alto in April, 2016. We would like to thank the organisers
for the opportunity to discuss this construction and the participants
(and especially P.~McNamara) for useful comments. We are very grateful
to the Clay Math Institute for funding for a visit of the first author
to Sydney, where this paper was completed. The first author was
supported by Fondecyt 1160152 and Proyecto Anillo ACT 1415 PIA-CONICYT.

\section{Soergel and Singular Soergel Diagrammatics}

\subsection{Hecke algebra and spherical module}
Fix $n \ge 0$. Let $W := S_n$
denote the symmetric group on $n$ letters, viewed as a Coxeter group 
{ $(W,S)$ where  $S = \{ s_i \}_{1 \le i \le n-1}$ is the set of simple transpositions (i.e. $s_i := (i,i+1)$),   with}
%with  simple reflections $S = \{ s_i \}_{1 \le i \le n-1}$ the  simple transpositions (i.e. $s_i := (i,i+1)$), 
  length function $\ell$ and Bruhat order $\le$.
Let $H$ denote the Hecke algebra of $(W,S)$ with standard {$\ZM[v^{\pm 1}]$-}basis $\{
h_x\}_{x \in W}$ and Kazhdan-Lusztig basis $\{ b_x\}_{x
\in W}$ (e.g. $b_s = h_s + vh_{\id}$ for all $s \in S$). We write
$b_x := \sum \b_{y,x} h_y$ (so $\b_{y,x}$ are Kazhdan-Lusztig
polynomials). For any expression $\un{x} := (s_1, s_2, \dots, s_m)$ we
set $b_{\un{x}} := b_{s_1} b_{s_2} \dots b_{s_m}$.

For any subset $A \subset S$ we denote by $W_A$ the (standard parabolic) subgroup it
generates, by $w_A \in W_A$ the longest element and by $W^A$ {  the  } minimal
coset representatives for $W/W_A$. Corresponding to
$A$ we have the spherical (left) module $M$ with its standard basis $\{
m_x\}_{x \in W^A}$ and Kazhdan-Lusztig basis $\{ c_x\}_{x
\in W^A}$ (see \cite[\S 3]{SoeKL}, note however that we work with left
modules throughout). We write $c_x := \sum_{y,x\in W^A} \gamma_{y,x} {   m}_y$ (so
$\gamma_{y,x}$ are spherical Kazhdan-Lusztig polynomials). The map
$m_x \mapsto h_{xw_A}$ gives an embedding $\phi : M \into H$ of left
$H$-modules mapping $c_x \mapsto b_{xw_A}$ \cite[Proposition 3.4]{SoeKL}. Given any
expression $\un{x} := (s_1, s_2, \dots, s_m)$ we
set $c_{\un{x}} := b_{s_1} b_{s_2} \dots b_{s_m} \cdot m_{\id}\in M$.

% by $R^A \subset R$
% the invariants under $A$.
\subsection{Diagrammatic Soergel bimodules}
Fix a field $\Bbbk$ of characteristic $p \ge 0$ and let $R :=
\Bbbk[x_1,\dots, x_n]$ be graded with $\deg x_i = 2$. The symmetric group
$W$ acts naturally on $R$ via permutation of variables.
To $W$ and $R$ one may associate an additive graded and
Karoubian monoidal category $\HC$ of ``diagrammatic Soergel
bimodules'' as in \cite{EW}. We denote the shift functor on $\HC$ by
$B \mapsto B({   i})$ for ${   i} \in \ZM$. For any expression
$\un{x}$ we denote by $B_{\un{x}}$ the corresponding Bott-Samelson
object in $\HC$ and (if $\un{x}$ is reduced) by $B_x$ its maximal
indecomposable summand. By \cite[Theorem
6.26]{EW} the set $\{ B_x \; | \; x \in W \}$ is a complete set of
isomorphism classes of indecomposable objects in $\HC$, up to shift
and isomorphism. We denote by $[\HC]$ the split Grothendieck {   ring}
%group 
of $\HC$ (a $\ZM[v^{\pm 1}]$-module via $v \cdot [M] := [M(1)]$) and by
$\ch : \HC \to H$ the character \cite[\S 6.5]{EW} ($\ch$ is uniquely
characterised by $\ch(B_{\un{x}}) = b_{\un{x}}$ and $\ch(M(1)) = v\ch(M)$). It induces an
isomorphism $\ch : [\HC] \simto H$.
We denote by $\p b_x := \ch(B_x) \in H$
the character of $B_x$. The set $\{ \p b_x\}_{x \in W}$ only depends on the
characterstic $p$ of $\Bbbk$ and yields the \emph{$p$-canonical basis} 
of $H$. We have ${}^0 b_x = b_x$ for all $x \in W$.  The expression $\p b_x = \sum \p \beta_{y,x} h_y$ defines the
\emph{$p$-Kazhdan-Lusztig polynomials} $\p \beta_{y,x}$.

\subsection{Diagrammatic Spherical category}
For any subset $A \subset S$ we denote by $\MC$ the spherical category
associated to $A$ (see \cite[\S 5]{EThick}). It is a graded additive
Karoubian left $\HC$-module category with shift functor $C \mapsto C(m)$. We denote by $C_{\id}$ the
``identity'' of $\MC$ (in the notation of \cite{EThick}, $C_{\id}$ is given by
the empty diagram consisting only of the $A$-colored membrane).
For any expression $\un{x}$ we
denote by $C_{\un{x}}$ the object $B_{\un{x}} \cdot C_{\id}$ and (if
$\un{x}$ is a reduced expression for $x \in W^A$) by $C_x$ its maximal
indecomposable summand. The set $\{ C_x \; | \; x \in W^A \}$ is a complete set of
isomorphism classes of indecomposable objects in $\MC$, up to shift
and isomorphism. We denote by $[\MC]$ the split Grothendieck group of
$\MC$ (a $\ZM[v^{\pm 1}]$-module as above) and by
$\ch : \MC \to M$ the character (it is uniquely
characterised by $\ch(C_{\un{x}}) = {   c}_{\un{x}}$ and $\ch(C(1)) =
v\ch(C)$).  The character map satisfies $\ch(BC)
= \ch(B)\ch(C)$ for all $B \in \HC$ and $C \in \MC$, and induces an
isomorphism $\ch : [\MC] \simto M$ of left $H {   \cong} [\HC]$-modules.
We denote by $\p c_x := \ch(C_x) \in M$ { the character of $C_x$}. The set $\{ \p c_x\}_{x \in W^A}$ yields the \emph{$p$-canonical basis}
of $M$. We have ${}^0 c_x = c_x$ for all $x \in W^A$.  The expression
$\p c_x = \sum_{y \in W^A} \p \gamma_{y,x} m_y$ defines the
\emph{spherical $p$-Kazhdan-Lusztig polynomials} $\p \gamma_{y,x}$.

There is a functor $\Phi : \MC \to \HC$ of left $\HC$-module categories which
sends $C_{\id}$ to $B_{w_A}$ (see \cite[Definition 5.4]{EThick}) and
satisfies $\Phi(C_x) =
B_{xw_A}$ for all $x \in W^A$. 
Passing to split Grothendieck groups as above it realises
the embedding $\phi : M \into H$. In particular $\phi(\p c_x) = \p b_{xw_A}$ and
hence 
\begin{equation} \label{eq:MHKL}
\p \gamma_{y,x} = \p \beta_{yw_A, xw_A}
\end{equation}
for all $x, y \in W^A$.
\subsection{Soergel's hom formulas} 
 Consider the
bilinear form $( -, - ) : H \times H \to \ZM[v^{\pm 1}]$ on $H$
defined in \cite[\S 2.4]{EW}. It satisfies $( p h,  q h' ) = \overline{p} q (h, h')$, 
$(b_sh, h') = (h, b_sh')$ and $(hb_s, h') = (h, h'b_s)$  for all $p, q
\in \ZM[v^{\pm 1}]$, $h, h' \in H$ and $s \in S$ (see \cite[\S
2.4]{EW}). Similarly, there is a unique bilinear form $( -, - ) : M
\times M \to \ZM[v^{\pm 1}]$ defined by $(m,m') := (\phi(m),
\phi(m'))/\widetilde{\pi}(A) \in \ZM[v^{\pm 1}]$, where $\widetilde{\pi}(A) := \sum_{{x} \in
  W_A} v^{2\ell(x)}$. It satisfies
$( p m,  q m' ) = \overline{p} q (m, m')$ and 
$(b_sm, m') = (m, b_sm')$ for all $p, q \in \ZM[v^{\pm 1}]$, $m, m'
\in M$ and $s \in S$. (It is not immediately obvious that $(\phi(m),
\phi(m'))/\widetilde{\pi}(A)$ always belongs to $\ZM[v^{\pm 1}]$, but
this is the case by \cite[(2.9)]{WSing}.)

Given $B, B' \in \HC$ we denote by $\gHom(B,B')
:= \bigoplus_{n \in \ZM} \Hom_{\HC}(B,B'(n))$, which is naturally a graded
$R$-bimodule. Similarly, given $C, C' \in \MC$ we denote {by} $\gHom(C,C')
:= \bigoplus_{n \in \ZM} \Hom_{\MC}(C,C'(n))$, which is naturally a graded
$(R,R^A)$-bimodule (see \cite[Definition 5.1]{EThick}). \emph{Soergel's hom formulas}  
%The hom formulas 
(crucial below) are the statements:
\begin{gather}
  \label{eq:1}
\begin{array}{c}
 \text{For $B,B' \in \HC$, $\gHom(B,B')$ is graded free as a left
   $R$-module,} \\ 
 \text{of graded rank $( \ch(B), \ch(B') )$.}
\end{array} \\
  \label{eq:2}
\begin{array}{c}
 \text{For $C,C' \in \MC$, $\gHom(C,C')$ is graded free as a left
   $R$-module,} \\ 
 \text{of graded rank $( \ch(C), \ch(C') )$.}
\end{array}
\end{gather}

As in the introduction, we say that an indecomposable object $X \in
\HC$ (resp. $X \in \MC$) is \emph{perverse} if it has no non-zero
endomorphisms of negative degree. (This terminology comes from
\cite{EW2}, where such bimodules play a key role in the proof of
Soergel's conjecture.) The following lemmas are a direct consequence
of the hom formulas above (see \cite[(6.1)]{EW2}):
\begin{lem}
A self-dual Soergel bimodule $B$ is perverse $\iff $  $\mathrm{ch}( B )=\bigoplus_{z\in W}\mathbb{Z}b_z. $ 
\end{lem}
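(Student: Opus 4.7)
The plan is to use Soergel's hom formula (\ref{eq:1}) to translate perversity into a numerical condition on $\ch(B)$. Since $R$ is generated in positive degrees and $\gHom(B,B)$ is graded free over $R$ of graded rank $(\ch(B),\ch(B))$, the negative-degree part of $\gHom(B,B)$ vanishes if and only if $(\ch(B),\ch(B))$ has no negative powers of $v$, i.e.\ lies in $\ZM_{\geq 0}[v]$. Hence $B$ is perverse iff $(\ch(B),\ch(B)) \in \ZM_{\geq 0}[v]$.

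The key combinatorial input I would record first is the almost-orthonormality of the Kazhdan--Lusztig basis: for all $y,z \in W$,
\[
(b_y,b_z) \in \delta_{y,z} + v\ZM_{\geq 0}[v].
\]
This is a standard fact in Kazhdan--Lusztig theory, following from the triangular expansion $b_x = h_x + \sum_{w<x} \beta_{w,x} h_w$ with $\beta_{w,x} \in v\ZM_{\geq 0}[v]$ and the pairings of standard basis elements from \cite[\S 2.4]{EW}.

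The direction ($\Leftarrow$) is then immediate: if $\ch(B) = \sum_z c_z b_z$ with $c_z \in \ZM$, then sesquilinearity together with $\overline{c_y} = c_y$ gives
\[
(\ch(B),\ch(B)) = \sum_{y,z} c_y c_z (b_y,b_z) \in \ZM_{\geq 0}[v],
\]
so $B$ is perverse.

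For ($\Rightarrow$), self-duality of $B$ forces $\ch(B)$ to be bar-invariant, so each coefficient in $\ch(B) = \sum_z c_z b_z$ lies in $\ZM[v+v^{-1}]$. Assume for contradiction that some $c_z \notin \ZM$ and let $K > 0$ be the largest integer such that $v^K + v^{-K}$ appears with non-zero coefficient in some $c_z$. Then in the expansion of $(\ch(B),\ch(B))$, the diagonal terms $c_z^2(b_z,b_z)$ with $c_z$ containing a non-trivial $v^K+v^{-K}$ summand each contribute a strictly positive integer to the coefficient of $v^{-2K}$ (via the constant term of $(b_z,b_z)$), while the off-diagonal contributions $c_y c_z(b_y,b_z)$ with $y \neq z$ cannot reach degree $v^{-2K}$ because $(b_y,b_z) \in v\ZM_{\geq 0}[v]$ raises the minimum degree by at least one. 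So $(\ch(B),\ch(B))$ acquires a strictly positive coefficient at $v^{-2K}$, contradicting perversity. The only real obstacle is the bookkeeping in this last lowest-degree analysis; once the almost-orthonormality is in hand, the rest is formal.
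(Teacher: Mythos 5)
Your argument is correct and is essentially the argument that the paper has in mind: the paper simply asserts that the lemma is a direct consequence of Soergel's hom formula and cites \cite[(6.1)]{EW2}, and your proof spells out exactly that deduction — translate perversity into the condition $(\ch(B),\ch(B))\in\ZM[v]$, invoke almost-orthonormality of the Kazhdan--Lusztig basis, and use that self-duality forces the coefficients $c_z$ to lie in $\ZM[v+v^{-1}]$, so that a coefficient with $K>0$ produces a strictly positive contribution in degree $-2K$ on the diagonal that the off-diagonal terms cannot cancel. One tiny imprecision: in the ($\Leftarrow$) step the computation with almost-orthonormality alone only gives $(\ch(B),\ch(B))\in\ZM[v]$ (the coefficients $c_yc_z$ may be negative); non-negativity of the coefficients comes for free from its being a graded rank, so the conclusion stands.
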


\begin{lem}\label{Mpositive}
A self-dual element ${C}\in \mathcal{M}$ is perverse $\iff $  $\mathrm{ch}({C})=\bigoplus_{z\in W^A}\mathbb{Z}c_z. $ 
\end{lem}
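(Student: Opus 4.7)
The plan is to deduce this spherical version from the preceding Hecke lemma by applying the latter to $\Phi(C) \in \HC$, where $\Phi : \MC \to \HC$ is the functor satisfying $\Phi(C_z) = B_{zw_A}$ and $\ch \circ \Phi = \phi \circ \ch$.

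First I would establish three preparatory observations. \emph{(a) Self-duality transfers:} $\Phi(C)$ is self-dual in $\HC$ whenever $C$ is self-dual in $\MC$, because self-duality of an object is equivalent to bar-invariance of its character (a Krull-Schmidt argument on indecomposable decompositions), and $\ch(\Phi(C)) = \phi(\ch C)$ is bar-invariant since $\phi$ sends the bar-invariant basis $\{c_z\}$ to the bar-invariant elements $\{b_{zw_A}\}$. \emph{(b) The character condition transfers:} since $\phi(c_z) = b_{zw_A}$ and $\{b_{zw_A}\}_{z\in W^A}$ is a subset of the Kazhdan-Lusztig basis $\{b_w\}_{w\in W}$, the coefficients in the expansion of $\ch(C)$ in $\{c_z\}$ agree with those in the expansion of $\phi(\ch C)$ in $\{b_w\}$; hence $\ch(C) \in \bigoplus_{z\in W^A}\ZM c_z$ if and only if $\phi(\ch C) \in \bigoplus_{w\in W}\ZM b_w$. \emph{(c) Perversity transfers:} $C$ is perverse if and only if $\Phi(C)$ is. Granting these, the preceding lemma applied to the self-dual Soergel bimodule $\Phi(C)$ yields the claim.

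The crux is (c), which I would deduce by comparing \eqref{eq:1} and \eqref{eq:2}: the graded left $R$-modules $\gHom(C,C)$ and $\gHom(\Phi(C),\Phi(C))$ are graded free with graded ranks $(\ch C,\ch C)_M$ and $(\phi\ch C,\phi\ch C)_H = \widetilde{\pi}(A)\cdot(\ch C,\ch C)_M$, respectively. Since $\widetilde{\pi}(A) = \sum_{w\in W_A} v^{2\ell(w)} \in \ZM_{\ge 0}[v]$ has constant term $1$ and no negative powers, and since both graded ranks lie in $\ZM_{\ge 0}[v^{\pm 1}]$ (so no cancellation can occur when multiplying), the two graded ranks share the same minimal $v$-degree. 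Because $R$ is non-negatively graded, perversity (the vanishing of $\Hom(X,X(k))$ for $k<0$) is equivalent to this minimal $v$-degree being non-negative, so perversity holds simultaneously for $C$ and $\Phi(C)$. The main obstacle I anticipate is the careful bookkeeping tying the positivity $\widetilde{\pi}(A) \in \ZM_{\ge 0}[v]$ to the stability of the minimal $v$-degree; the rest is a routine compatibility check with the grading conventions of the hom formulas.
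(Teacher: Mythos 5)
Your proof is correct, but it follows a route that differs from the one the paper intends. The paper treats both lemmas in parallel as ``direct consequences of the hom formulas'' (citing \cite[(6.1)]{EW2}), i.e.\ one expands $\ch(C) = \sum_z a_z c_z$ with each $a_z$ symmetric (by self-duality), computes $(\ch C,\ch C)$ using the hom formula \eqref{eq:2} in $\MC$ directly, and uses positivity and quasi-orthogonality of the spherical KL basis to conclude that negative-degree endomorphisms exist exactly when some $a_z\notin\ZM$. You instead reduce the spherical statement to the already-stated Hecke lemma by pushing $C$ through the functor $\Phi$. Your three transfer observations are all sound: (a) and (b) are immediate from $\Phi(C_z)=B_{zw_A}$, $\phi(c_z)=b_{zw_A}$, and bar-invariance of both KL bases; and (c) is the nontrivial point, which you handle correctly by combining \eqref{eq:1} and \eqref{eq:2} with the identity $(\phi m,\phi m')=\widetilde\pi(A)(m,m')$ and the observation that multiplying an element of $\ZM_{\ge 0}[v^{\pm1}]$ by $\widetilde\pi(A)\in 1+v\ZM_{\ge 0}[v]$ preserves the minimal $v$-degree (the nonnegativity of both sides being exactly what rules out cancellation). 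The trade-off: the paper's intended argument re-runs the EW2 computation once in each of $\HC$ and $\MC$, whereas your approach proves the Hecke lemma once and formally transports it, at the cost of the $\widetilde\pi(A)$ bookkeeping; the two are comparable in length, and your reduction has the merit of making the dependence on the Hecke case and on the functor $\Phi$ explicit, which is in the spirit of how the paper later uses \eqref{eq:MHKL} to pass non-perverse objects from $\MC$ to $\HC$.
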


Below we will prove that there exists a non-perverse object in $\MC$.
By \eqref{eq:MHKL} non-perverse objects in $\MC$ produce non-perverse
objects in $\HC$ by application of the functor $\Phi$.

\subsection{Intersection forms in $\mathcal{M}$} In what follows we
identify $W^A$ and $W/W_A$ via the canonical isomorphism given by the
composition $W^A \into W \onto W/W_A$.
If $I \subset W/W_A$ is an ideal (i.e. $x \le y \in I \Rightarrow x \in
I$)  we denote by $\mathcal{M}_I$ the ideal of $\mathcal{M}$
generated by all morphisms which factor through an object 
%Bott-Samelson bimodule 
$C_{\un{y}}$, for any reduced expression $\un{y}$ for $y
\in I$.

Given $x \in W^A$ we denote by $\mathcal{M}^{\ge x}$ the quotient category
$\mathcal{M} / \mathcal{M}_{\not \ge x}$ where $\not\ge~\hspace{-0.25cm} x := \{ y \in W^A\;|\; y \not \ge
x \}$. We write $\Hom_{\ge x}$ for (degree zero) morphisms in
$\mathcal{M}^{\ge x}$. All objects  $C_{\un{x}}$
%Bott-Samelson bimodules $M_{\un{x}}$ 
corresponding to 
reduced expressions $\un{x}$ for $x$ become canonically isomorphic to
$C_x$ in $\mathcal{M}^{\ge x}$. For $C \in \MC$ and any $x \in W^A$
the spaces
\[
\Hom^\bullet_{\ge x}( C_x, M)  = \bigoplus \Hom_{\ge x}(C_x, M(i)) \quad
\text{and} \quad
\Hom^\bullet_{\ge x}( M, C_x)  = \bigoplus \Hom_{\ge x}(M,C_x(i))
\]
are free as graded left $R$-modules of graded rank $p_x$ where $\ch(M) = \sum_{x \in W^A} p_x
m_x$. In particular, we have $\End_{\ge x}(C_x) = R$.
%(for example, see \cite[Theorem 7.9]{WSing} and use the corresponding result for Soergel bimodules).
Given an expression $\un{w}$ and an element $x\in  W^A,$ the \emph{intersection form}
is the canonical pairing
\[
I_{x,\un{w},d}^{\Bbbk} : \Hom_{\ge x}(C_x(d), C_{\un{w}}) \times \Hom_{\ge x}(C_{\un{w}}, C_x(d))
\to \Bbbk = \End_{\ge x}(C_x(d))/(R^+)
\]
where $R^+ \subset R$ denotes the ideal of elements of positive degree.

\begin{lem} \label{lem:rank}
  The multiplicity of $C_x(d)$ as a summand of $C_{\un{w}}$ equals
  the rank of $I^{\Bbbk}_{x,\un{w},d}$.
\end{lem}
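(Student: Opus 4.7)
The plan is to use Krull--Schmidt together with the local structure of the endomorphism ring $\End_{\ge x}(C_x(d)) = R$ to reduce the intersection form to a block-diagonal pairing whose rank is manifestly the multiplicity.

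Let $m$ denote the multiplicity of $C_x(d)$ as a direct summand of $C_{\un w}$. By Krull--Schmidt in $\MC$, I would write $C_{\un w} \cong C_x(d)^{\oplus m} \oplus Y$, where $Y$ contains no summand isomorphic to $C_x(d)$. Since the degree-zero part of $R$ is $\Bbbk$, this induces
\[
\Hom_{\ge x}(C_x(d), C_{\un w}) \cong \Bbbk^m \oplus \Hom_{\ge x}(C_x(d), Y),
\]
and dually for the other side. The form $I^\Bbbk_{x,\un w,d}$ is block-diagonal with respect to this decomposition, because for distinct Krull--Schmidt summands the composition $\pi_j \iota_i$ vanishes. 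Writing $\iota_i, \pi_i$ for the inclusion and projection of the $i$-th copy of $C_x(d)$, the identities $\pi_j \iota_i = \delta_{ij}\,\id$ show that the $\Bbbk^m \times \Bbbk^m$ block is the standard non-degenerate pairing of rank $m$.

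It remains to show that the complementary block pairing $\Hom_{\ge x}(C_x(d), Y) \times \Hom_{\ge x}(Y, C_x(d)) \to \Bbbk$ is identically zero. Decomposing $Y \cong \bigoplus_i C_{y_i}(d_i)$ in $\MC$ with $(y_i, d_i) \ne (x, d)$, I would handle each summand in turn. If $y_i \not\ge x$, the summand vanishes in $\MC^{\ge x}$. If $y_i = x$ but $d_i \ne d$, one of the two Hom spaces $\Hom_{\ge x}(C_x(d), C_x(d_i))$ or $\Hom_{\ge x}(C_x(d_i), C_x(d))$ is zero on degree grounds, since $R$ is non-negatively graded. The crucial case is $y_i > x$: if some $f, g$ gave $gf = c \cdot \id$ with $c \in \Bbbk^\times$, then the hom formula identifying the degree-zero quotient $\End^0_\MC(C_x(d)) \to \End^0_{\ge x}(C_x(d))$ with the identity $\Bbbk \to \Bbbk$ lets me lift $f$ and $g$ to degree-zero morphisms in $\MC$ with the same composition. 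The element $c^{-1} fg \in \End_\MC(C_{y_i}(d_i))$ is then a non-trivial idempotent, so $C_x(d)$ would split off of $C_{y_i}(d_i)$ in $\MC$, contradicting the indecomposability of $C_{y_i}(d_i)$.

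The main obstacle is this last case, where the argument hinges on lifting morphisms from $\MC^{\ge x}$ back to $\MC$ and on the paper's hom formula ensuring that no non-trivial degree-zero endomorphism of $C_x(d)$ factors through objects indexed outside $\{z \in W^A : z \ge x\}$. Granting these inputs, the standard idempotent-splitting argument in the Karoubian category $\MC$ yields the required contradiction, and the rank of $I^\Bbbk_{x,\un w,d}$ is exactly $m$.
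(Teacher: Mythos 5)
Your argument is the standard Krull--Schmidt unravelling that the paper itself invokes by citing \cite[Lemma 3.1]{JMW2}: the block-diagonalisation, the identity block of rank $m$, the $y_i \not\ge x$ case, and the degree-bound case $y_i = x$, $d_i \ne d$ are all fine. But the crucial case $y_i > x$ rests on an unjustified (and in general false) claim, namely that $\End^0_{\MC}(C_x(d)) \to \End^0_{\ge x}(C_x(d))$ is an isomorphism of one-dimensional $\Bbbk$-vector spaces, so that $f$, $g$ lift to $\tilde f$, $\tilde g$ in $\MC$ with $\tilde g\tilde f = c\cdot\id$ on the nose, making $c^{-1}\tilde f\tilde g$ an honest idempotent. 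The hom formula does not give $\End^0_{\MC}(C_x(d)) = \Bbbk$: it gives the graded rank of $\gHom(C_x,C_x)$ as $(\p c_x, \p c_x)$, and when $\p c_x$ involves negative powers of $v$ --- precisely the non-perverse situation the paper is trying to exhibit --- the degree-zero part of this graded free module has dimension strictly greater than one. So $\tilde g\tilde f$ need only equal $c\cdot\id$ modulo the kernel of $\End^0_{\MC}(C_x(d)) \to \End^0_{\ge x}(C_x(d))$, and your idempotent identity $(c^{-1}\tilde f\tilde g)^2 = c^{-1}\tilde f\tilde g$ can fail.

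The fix is short but should be stated, and it is the one the cited lemma actually uses. By Krull--Schmidt in the $\Bbbk$-linear, finite-dimensional-Hom category $\MC$, the ring $\End^0_{\MC}(C_x(d))$ is local. The kernel of the surjection $\End^0_{\MC}(C_x(d)) \twoheadrightarrow \End^0_{\ge x}(C_x(d)) = \Bbbk$ consists of endomorphisms factoring through objects $C_z(m)$ with $z \not\ge x$; such an endomorphism is never an isomorphism (else $C_x(d)$ would be a summand of such a $C_z(m)$), hence lies in the radical, and since the target is a field the kernel equals the radical. Therefore $\tilde g\tilde f$, which maps to $c \ne 0$, is a unit, so $(\tilde g\tilde f)^{-1}\tilde g$ splits $\tilde f$ and exhibits $C_x(d)$ as a direct summand of the indecomposable $C_{y_i}(d_i)$; thus $C_x(d)\cong C_{y_i}(d_i)$, contradicting $(y_i,d_i)\ne(x,d)$. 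With this substitution your proof is complete and coincides with the paper's (cited) argument.
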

\begin{proof} This claim is standard for a   $\Bbbk$-linear  Krull-Schmidt category with finite dimensional Hom spaces. For one proof see  \cite[Lemma 3.1]{JMW2}.
\end{proof}

\subsection{Parabolic defect}
Fix a word $\un{y} = s_{i_1} \dots s_{i_m}$ in $S$ representing an element $y\in W$. A
\emph{subexpression} of $\un{y}$ is a sequence $\un{e} = e_1 \dots e_m$ with
$e_i \in \{ 0, 1 \}$ for all $i$. We set $\un{y}^{\un{e}} :=
s_{i_1}^{e_1} \dots s_{i_m}^{e_m} \in W$. Any subexpression $\un{e}$
determines a sequence $y_0, y_1, \dots, y_m \in W$ via $y_0 := \id{}$, $y_j
:= s_{i_{m+1-j}}^{e_{m+1-j}}y_{j-1}$ for $1 \le j \le m$ (so $y_m =
\un{y}^{\un{e}}$). Given a subexpression $\un{e}$ we associate a
sequence $d_j \in \{ U, D, S \}$ (for \emph{U}p, \emph{D}own, \emph{S}tay) via
\[
d_j := \begin{cases} U & \text{if $ y_{m-j}<s_{i_j} y_{m-j}\in W/W_A$,}\\
D & \text{if $ y_{m-j}>s_{i_j}y_{m-j}\in W/W_A,$}\\
S& \text{if $s_{i_j}y_{m-j} = y_{m-j}$ in $W/W_A$.} \end{cases}
\]
We usually view $\un{e}$ as the decorated sequence $( d_1e_1, \dots,
d_me_m)$. The
\emph{parabolic defect} of $\un{e}$ is
\[
\mathrm{pdf}(\un{e}) := | \{ i \; | \; d_ie_i = U0\ \mathrm{or} \ S1 \}| - | \{ i \; | \; d_ie_i = D0 \ \mathrm{or} \ S0 \}|.
\]

One can see readily from the formula for $b_sc_x$ (see \cite[\S 3]{SoeKL} for example) the following formula of Dehodar 
\begin{equation}\label{Dehodar}
m_{\underline{x}}=\sum_{\underline{e}\subset \underline{x}}v^{\mathrm{pdf}(\underline{e})}m_{\underline{x}^{\underline{e}}}.\end{equation}
where  $\underline{x}^{\underline{e}}$ is viewed as an element of $W/W_A.$

\section{Existence of a non-perverse indecomposable Soergel bimodule}

\subsection{Strategy of the proof}
Consider a reduced expression $\underline{w}$ representing an element $w\in W^A$ and another element $x\in  W^A,$ such that 
\begin{gather}\label{intersection}
\mathrm{rk}(I^{\mathbb{Q}}_{x,\un{w},-1})=1\ ,\
\mathrm{rk}(I^{\mathbb{F}_2}_{x,\un{w},-1})=0  \ \ \mathrm{and} \\
\label{interval}
m_{\underline{w}}\in \bigoplus_{x<z\leq w}\mathbb{Z}[v]m_z\oplus \bigoplus_{x\nless z}\mathbb{Z}[v, v^{-1}]m_z.
\end{gather}

\begin{lem} If the  above hypotheses  are satisfied, then there is a
  non-perverse indecomposable object in $\mathcal{M}$ over the field
  $\mathbb{F}_2$.
\end{lem}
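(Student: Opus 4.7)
The plan is to proceed by contradiction: suppose every indecomposable summand of $C_{\un{w}}$ in $\MC$ over $\FM_2$ is perverse. Write $C_{\un{w}} \cong \bigoplus_y C_y^{\FM_2}(d)^{\oplus m_{y,d}^{\FM_2}}$ and set $P_y^{\FM_2}(v) := \sum_d m_{y,d}^{\FM_2} v^d \in \Nvv$; by Lemma~\ref{lem:rank}, $[v^d]P_y^{\FM_2}(v)$ is the rank of $I^{\FM_2}_{y,\un{w},d}$. Under the perverseness assumption, Lemma~\ref{Mpositive}, combined with the standard positivity of the spherical $p$-canonical basis (obtained by modular reduction of an integral form), gives $\ch(C_y^{\FM_2}) = c_y + \sum_{z<y} n_{z,y}\, c_z$ with $n_{z,y}\in\ZM_{\ge 0}$.

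Now I compare two expansions of $c_{\un{w}}$ in the $\{c_y\}$-basis. Characteristic zero (Soergel's conjecture, proved by Elias--Williamson) gives $c_{\un{w}} = \sum_y P_y^{\QM}(v) c_y$, where $P_y^{\QM}(v)$ is the analogous char-$0$ multiplicity polynomial, while the $\FM_2$-decomposition gives
$$
c_{\un{w}} \;=\; \sum_y P_y^{\FM_2}(v) \ch(C_y^{\FM_2}) \;=\; \sum_y \Bigl( P_y^{\FM_2}(v) + \sum_{z > y} n_{y,z}\, P_z^{\FM_2}(v) \Bigr) c_y.
$$
Uniqueness forces $P_y^{\FM_2}(v) + \sum_{z > y} n_{y,z} P_z^{\FM_2}(v) = P_y^{\QM}(v)$. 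Specialising to $y=x$ and extracting $[v^{-1}]$, hypothesis \eqref{intersection} combined with Lemma~\ref{lem:rank} ($[v^{-1}]P_x^{\QM}=1$, $[v^{-1}]P_x^{\FM_2}=0$) yields $\sum_{z > x} n_{x,z}\,[v^{-1}] P_z^{\FM_2}(v) = 1$. All summands being non-negative integers, there must exist $z^* > x$ (with $z^* \le w$, since $P_{z^*}^{\FM_2}\neq 0$) satisfying $n_{x,z^*}\ge 1$ and $[v^{-1}] P_{z^*}^{\FM_2}(v) \ge 1$.

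To reach the contradiction, I compute $[m_{z^*}] c_{\un{w}}$ via the $\FM_2$-decomposition:
$$
[m_{z^*}] c_{\un{w}} \;=\; P_{z^*}^{\FM_2}(v) \;+\; \sum_{y > z^*} P_y^{\FM_2}(v)\,\bigl([m_{z^*}] \ch(C_y^{\FM_2})\bigr).
$$
For $y > z^*$ one has $[m_{z^*}]\ch(C_y^{\FM_2}) = n_{z^*,y} + \gamma_{z^*,y} + \sum_{z^* < u < y} n_{u,y}\,\gamma_{z^*,u} \in \ZM_{\ge 0}[v]$ (each $\gamma_{z^*,u}$ with $u > z^*$ lies in $v\ZM_{\ge 0}[v]$ by standard properties of the KL basis). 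Hence every contribution is non-negative and $[v^{-1}][m_{z^*}] c_{\un{w}} \ge [v^{-1}] P_{z^*}^{\FM_2}(v) \ge 1$. But $x < z^* \le w$, so \eqref{interval} forces $[m_{z^*}] c_{\un{w}} \in \ZM[v]$, which gives $[v^{-1}][m_{z^*}] c_{\un{w}} = 0$, a contradiction.

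The main obstacle I anticipate is cleanly justifying the positivity $n_{z,y} \in \ZM_{\ge 0}$, i.e.\ that the spherical $p$-canonical basis expands in the char-$0$ Kazhdan--Lusztig basis with non-negative coefficients. This follows from the existence of integral forms of indecomposable Soergel bimodules together with their decomposition after extension of scalars to $\QM$, but the argument should be spelled out. Everything else is bookkeeping around Lemma~\ref{lem:rank}, the Deodhar-style character calculation, and the two hypotheses \eqref{intersection}--\eqref{interval}.
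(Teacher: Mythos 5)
Your proposal is correct, and it takes a genuinely different route from the paper. The paper argues by comparing the coefficient of $m_x$ in $m_{\underline{w}}$ as computed from the two decompositions $m_{\underline{w}} = \sum_y P^{\mathbb Q}_y c_y$ and $m_{\underline{w}} = \sum_y P^{\mathbb F_2}_y\, {}^2 c_y$: using \eqref{interval} together with \eqref{positive}, a descending Bruhat induction shows that $P^{\mathbb Q}_y$ and $P^{\mathbb F_2}_y$ lie in $\mathbb Z[v]$ for all $x<y\le w$, whence the $v^{-1}$-coefficient of $[m_x]m_{\underline{w}}$ is $1$ from the $\mathbb Q$-side and $0$ from the $\mathbb F_2$-side, a contradiction. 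Crucially, this argument uses no positivity at all, only the polynomiality of (spherical) Kazhdan--Lusztig polynomials and of the $p$-analogues (the latter supplied by Lemma~\ref{Mpositive}). Your argument instead works in the $\{c_y\}$-basis: you compare $P^{\mathbb Q}_x$ with $P^{\mathbb F_2}_x + \sum_{z>x} n_{x,z}P^{\mathbb F_2}_z$ and, invoking two positivity facts --- nonnegativity of spherical KL polynomials and nonnegativity of the coefficients $n_{z,y}$ of the spherical $p$-canonical basis in the KL basis --- you extract a single element $z^*>x$ whose multiplicity polynomial has a $v^{-1}$-term, and then contradict \eqref{interval} at $m_{z^*}$ rather than at $m_x$. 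This neatly avoids the induction. The trade-off is exactly the one you flag: your proof imports both KL positivity (Elias--Williamson) and the modular-reduction positivity $n_{z,y}\in\mathbb Z_{\ge0}$ (which requires the integral-form argument as in the Jensen--Williamson treatment of the $p$-canonical basis, transported to the spherical setting via $\Phi$), whereas the paper's proof is self-contained given the hypotheses. A minor point in your favour: you only need perverseness of the summands of $C_{\underline{w}}$, not of every indecomposable in $\mathcal M$, though since one is proving a pure existence statement this makes no difference to the conclusion.
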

\begin{proof}
Suppose, for contradiction, that there is no non-perverse
indecomposable object in $\mathcal{M}$ over the field
$\mathbb{F}_2$. By Lemma \ref{Mpositive} we
have  \begin{equation}\label{positive}
{}^2\underline{m}_y\in \bigoplus_{z\in W^A}\mathbb{Z}\underline{m}_z\
\ \ \mathrm{for\ all \ }y<w. 
\end{equation}
By Lemma \ref{lem:rank} we have the following formulae 
\begin{gather*}
c_{w}=m_{\underline{w}}-\sum_{\substack{x<w \\
x\in {}^aW}} \bigg( \sum_{d\in \mathbb{Z}}
\mathrm{rk}(I^{\mathbb{Q}}_{x,\un{w},d})v^d \bigg)  c_x
\quad \text{and} \quad
{}^2c_{w}=m_{\underline{w}}-\sum_{\substack{x<w \\
x\in {}^aW}} \bigg( \sum_{d\in \mathbb{Z}} \mathrm{rk}(I^{\mathbb{F}_2}_{x,\un{w},d})v^d \bigg)  {}^2c_x.
\end{gather*}
By Equations (\ref{interval}) and (\ref{positive}),  we have that
$v^{-1}m_x$ appears in the expansion of   ${}^2m_{\underline{w}}$ in
the standard basis. Now apply Lemma \ref{Mpositive} again.
\end{proof}

\subsection{The elements $\underline{w}$ and $x$ }

Consider the string diagram in Figure \ref{wpic}.
We denote the corresponding reduced expression (obtained by reading
from bottom to top) as $\un{w}$, so that $$\un{w} = (s_1, s_2, \ldots, s_{14},  s_2, s_3, \dots, s_{13}, s_4, s_5, \ldots, s_{12},s_3,s_4,\ldots, s_{11}, \ldots )=:(t_1,\ldots, t_{78}).$$ 
 Let $w\in W^A$ be the element represented by
 $\underline{w}$. By  \cite[Lemma
 5.5]{WExplosion} (or by simple inspection), $\underline{w}$ is
 reduced.  Let us define $x:=w_B\in W^A.$

% \begin{wrapfigure}{R}{0.4\textwidth}
% \label{wpic}
% \centering
% \includegraphics[width=0.3\textwidth]{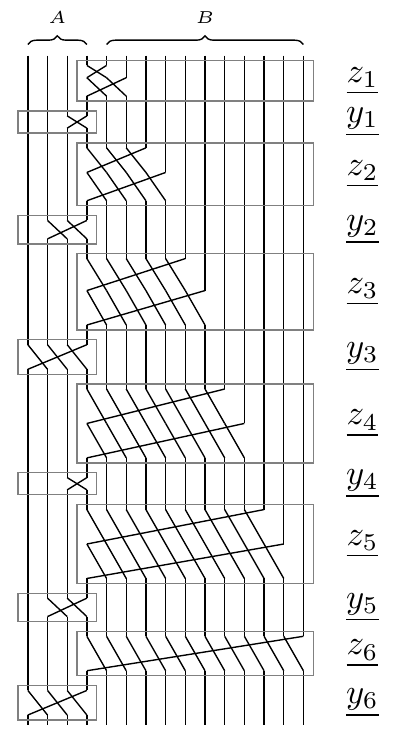}
% \centering
% \caption{The reduced expression $\un{w}$.}
% \end{wrapfigure}
\begin{figure}
\centering
\includegraphics[width=0.22\textwidth]{permimg}
\centering
\caption{The reduced expression $\un{w}$.}
\label{wpic}
\end{figure}

\subsection{Proof of equations \ref{intersection} and  \ref{interval}}

 If $\underline{e}=(e_1,e_2, \ldots, e_{78})$ is a reduced expression of $\underline{w}$ with $\underline{w}^{\underline{e}}\in w_BW_A$, then, by \cite[Lemma
 5.6]{WExplosion} $t_i=s_4\implies e_i=0$ and  for length reasons one
 has that if $t_i=s_{j}$ with $j\geq 5$ then $e_i=1.$ In both these cases $d_i$
 is $U$.  So a subexpression $\underline{e}$ of $\underline{w}$ for
 $x$ is completely determined by its ``$y$-part" (see Figure \ref{wpic}). In other words, it
 is determined by the sets  $I^0:=\{i\, \vert \,
 t_i=s_{j}\text{ with}\ j\leq 3\text{ and }e_i=0\}$ and $I^1:=\{i\, \vert \,
 t_i=s_{j}\text{ with}\ j\leq 3\text{ and }e_i=1\}$ (in these cases $d_i=S$). By the definition of the
 parabolic defect, one has that 
 $$\mathrm{pdf}(\underline{e})=11-|I^0 |+|I^1|.$$
With this formula we see that if $\underline{e}$ is a subexpression of $\underline{w}$ for $x$ we have 
$\mathrm{pdf}(\underline{e})=-1 \iff |I^0|=12$, and thus
there is only one subexpression satisfying this. On the other hand,
$\mathrm{pdf}(\underline{e})=1 \iff |I^0|=11$, thus there
are $12$ subexpressions satisfying this. Thus, in this case, the
intersection form $I^{\mathbb{Q}}_{x,\un{w},-1}$ is a $1\times
12$-matrix. One can calculate explicitly that this matrix is given
by $$(-2, -2, 0, -2, -2, 0, -2, -2, -2, 2, 0, 0).$$
To perform this calculation one uses the main result of
\cite{HW} together with the same reductions used at the end of
\cite[\S 5]{WExplosion}. For $1 \le i < 15$ let $\alpha_i := x_{i+1} -
x_i \in R$ denote the simple root and let $\partial_i : R \to R(-2) :
f \mapsto (f - s_i(f))/\alpha_i$ denote the
Demazure operator. Each of the $12$
entries above is the result of erasing one $\partial_i$ from the following expression (which is equal to $0$ for degree reasons) 
$$\partial_1\partial_2\partial_3 (\alpha_4   \partial_2\partial_3    (\alpha_4^2   \partial_3    (\alpha_4^2   \partial_1\partial_2\partial_3  (\alpha_4^2     \partial_2\partial_3  (\alpha_4^2     \partial_3(\alpha_4^2)))))) .$$
For example, if we erase the fourth $\partial$ (i.e.  $\partial_2$),
we obtain the fourth entry of the intersection form 
$$\partial_1\partial_2\partial_3 (\alpha_4   \partial_3
(\alpha_4^2   \partial_3
(\alpha_4^2   \partial_1\partial_2\partial_3
(\alpha_4^2     \partial_2\partial_3
(\alpha_4^2     \partial_3(\alpha_4^2))))))  = -2.$$
Note that this matrix has rank 1 over $\QM$, but rank $0$ over a field
of characteristic $2$. This proves \eqref{intersection}.

To check \eqref{interval} one needs to do a big computer check. But
there is one point that needs explanation about this calculation. If
one considers all the subexpressions of $\underline{w}$ one has $2^{78}$
possibilities. This is too big even for our computer! Suppose
$\underline{e}=(e_1, e_2, \ldots, e_{78})$ 
is a subexpression such that $\un{w}^{\un{e}}W_A$ belongs to the the
interval $[xW_A, wW_A]$, then for any $i$ such that
$t_i\in W_B$ we must have $e_i=1$. Thus we ``only'' need to check $2^{23}$
subexpressions, which is feasible by computer (and takes our machine 20
minutes). 

\def\cprime{$'$} \def\cprime{$'$} \def\cprime{$'$}

% \bibliographystyle{myalpha}
% \bibliography{gen}

\end{document}